\documentclass[11pt]{article}
\usepackage{amsmath,amsfonts,amsthm,amssymb, mathtools}
\usepackage{mathrsfs, graphicx,color,latexsym, tikz, calc,
}
\usepackage{tikz-cd}
\usepackage{graphicx}
\usepackage{epstopdf}
\usepackage{enumerate}
\usepackage{caption}
\usepackage{stmaryrd}
\usepackage{xcolor}
\usepackage{hyperref}
\hypersetup{
    colorlinks=true,
    citecolor=blue,
    linkcolor=blue,
    urlcolor=blue
}
\usepackage{cite}

\usetikzlibrary{shadows}
\usetikzlibrary{patterns,arrows,decorations.pathreplacing}

\usepackage[margin=2.5cm]{geometry}
\usepackage{ulem}

\newtheorem{theorem}{Theorem}[section]

\newtheorem{lemma}[theorem]{Lemma}

\newtheorem{problem}[theorem]{Problem}
\newtheorem{conjecture}[theorem]{Conjecture}

\allowdisplaybreaks[4]

\title{\bf \Large }
\date{\today }
\title{{\bf \Large 
The binomial norm  of intersecting-union families}\footnote{ Lihua Feng was supported by the NSFC (Nos. 12271527 and 12471022) and NSF of Qinghai Province (No. 2025-ZJ-902T). E-mail addresses: \url{wuyjmath@163.com} (Y. Wu), \url{fenglh@163.com} (L. Feng). }
\author{
{\small  Yongjiang Wu,\ \ Lihua Feng\footnote{Corresponding author}
}\\[2mm]
\small School of Mathematics and Statistics, HNP-LAMA, Central South University\\
 \small Changsha, Hunan, 410083, China\\ 
}}

\begin{document}
\maketitle
\begin{abstract}
In a 2021 survey on Katona's circle method, Frankl conjectured that every  family $\mathcal{F}\subseteq 2^{[n]}$ in which any two members intersect and no two members cover $[n]$ satisfies the sharp  binomial norm bound
$
  \lVert\mathcal F\rVert_n
  :=\sum_{F\in\mathcal F}\binom{n}{|F|}^{-1}
  \leq \frac{n+1}{6}.
$
This improves the earlier estimate $\frac{n}{4}$ obtained by the circle method. In this paper, we prove  Frankl's conjecture and  determine all extremal families.
Our proof develops a continuous $p$-biased measure approach in place of the circle method. The intersection and union conditions lead to a sharp estimate for
$
\mu_p(\mathcal F)+\mu_{1-p}(\mathcal F).
$
Integrating this estimate over $p$ converts it directly into the desired binomial norm bound and recovers the optimal coefficient $\frac{1}{6}$. This continuous averaging is the key new ingredient of the proof and also yields the characterization of all extremal families.
\end{abstract}

{\bf AMS Classification}:  05D05; 05C65 

{\bf Keywords}: Intersecting-union families; Binomial norm; $p$-biased measure

\section{Introduction}
Let $[n]=\{1,2,\ldots,n\}$ and let $2^{[n]}$ denote its power set. For a family $\mathcal{F}\subseteq 2^{[n]}$, write $\mathcal{F}^{(k)}=\{F\in\mathcal{F}:|F|=k\}$. A family is called \textit{intersecting} if any two of its members have non-empty intersection. It is called \textit{union} if the union of any two of its members is not the whole ground set. In this paper we consider families satisfying these two dual  restrictions simultaneously.

A family $\mathcal{F}\subseteq 2^{[n]}$ is called an \textit{intersecting-union family}, abbreviated \textit{IU-family}, if for all $F,G\in\mathcal{F}$,
$$
F\cap G\neq\emptyset \qquad\text{and}\qquad F\cup G\neq [n].
$$
The study of IU-families dates back to the 1970s. Daykin and Lov\'asz \cite{DL} proved that every IU-family satisfies the simple size bound
$$
|\mathcal{F}|\le 2^{n-2},
$$
a result also obtained independently by Seymour \cite{Seymour}. This bound is best possible and can be proved elegantly using the Harris--Kleitman correlation inequality \cite{Harris,Kleitman}.
More recent work has extended this line of investigation in two directions: Frankl and Kupavskii \cite{FK2025} proved a generalization of this result in the setting of integer sequences, whereas Frankl and Wang \cite{FW26} further studied the extremal cardinality problem for families with more general intersection and union constraints. 
 While the size bound gives a complete answer to the extremal cardinality question, it treats all sets equally regardless of their sizes and thus does not capture the finer distributional structure of the family. A more refined size parameter records the proportion of each layer of
the Boolean lattice occupied by a family. Following Frankl
\cite{FranklBinomialNorm}, for $\mathcal F\subseteq 2^{[n]}$, we define
its \textit{binomial norm} by
$$
  \lVert\mathcal F\rVert_n
  :=\sum_{F\in\mathcal F}\binom{n}{|F|}^{-1}.
$$
Equivalently,
$$
  \lVert\mathcal F\rVert_n
  =\sum_{k=0}^{n}
    \frac{|\mathcal F^{(k)}|}{\binom{n}{k}},
$$
Thus, the binomial norm is the sum of the densities of $\mathcal F$ in
the individual layers of the Boolean lattice.

Recall that a family $\mathcal F\subseteq 2^{[n]}$ is an \textit{antichain} if
no two distinct members $F,G\in\mathcal F$ satisfy $F\subseteq G$.
The classical LYM inequality asserts that every antichain  $\mathcal F\subseteq 2^{[n]}$ satisfies
$$
  \lVert\mathcal F\rVert_n\leq 1.
$$
Historically, Yamamoto \cite{Yamamoto} proved this inequality in 1954, Meshalkin \cite{Meshalkin}  gave
an alternative proof and generalization in 1963,  Bollob\'as's more general
set-pairs inequality \cite{Bollobas1965} appeared in 1965, and Lubell gave a short proof
in 1966 \cite{Lubell}.
Accordingly, we use the name \textit{LYM inequality} for the classical
antichain result and the term \textit{binomial norm} for the function
$\lVert\cdot\rVert_n$.

The binomial norm also has a natural extremal theory beyond antichains.
Frankl \cite{FranklBinomialNorm} proved, in particular, that if a family
$\mathcal F\subseteq 2^{[n]}$ contains no $s$ pairwise disjoint members,
then
$$
  \lVert\mathcal F\rVert_n
  \leq \frac{s-1}{s}(n+1),
$$
and established a corresponding inequality for cross-dependent
families, together with binomial-norm versions of the theorems of
Harper and Katona. For $s=2$, this gives
$$
  \lVert\mathcal F\rVert_n\leq \frac{n+1}{2}
$$
for every intersecting family $\mathcal F$. For IU-families, which are
generally not antichains, the analogous problem is to determine the
largest possible binomial norm under the two dual restrictions.

In a 2021 survey on old and new applications of Katona's circle method, Frankl \cite{Frankl2021} addressed this question. Using the circle method, a powerful averaging technique introduced by Katona \cite{Katona1974} that has found numerous applications in extremal set theory, he obtained the bound
$$
 \lVert\mathcal F\rVert_n\leq \frac{n}{4}
$$
for every IU-family $\mathcal{F}$. He further proposed the following conjecture.

\begin{conjecture}[Frankl, Conjecture 11.4 in \cite{Frankl2021}]
For every IU-family $\mathcal{F}\subseteq 2^{[n]}$,
$$
\lVert\mathcal F\rVert_n\leq  \frac{n+1}{6}.
$$
\end{conjecture}

The purpose of this paper is to prove Frankl's conjecture and to determine its extremal structure.
Our main result is the following.

\begin{theorem}\label{tm1}
Let $\mathcal{F}\subseteq 2^{[n]}$ be an IU-family. Then
$$
\lVert\mathcal F\rVert_n\leq  \frac{n+1}{6}.
$$
Moreover, equality holds if and only if $\mathcal{F}=\{F\subseteq [n]: i\in F,\ j\notin F\}$ for some distinct $i,j\in [n]$.
\end{theorem}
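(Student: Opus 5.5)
The plan is to turn the binomial norm into an integral of $p$-biased measures, bound the integrand pointwise, and then integrate. For $\mathcal G\subseteq 2^{[n]}$ write $\mu_p(\mathcal G)=\sum_{G\in\mathcal G}p^{|G|}(1-p)^{n-|G|}$. The Beta integral
$$\int_0^1 p^k(1-p)^{n-k}\,dp=\frac{1}{(n+1)\binom nk}$$
gives $\lVert\mathcal F\rVert_n=(n+1)\int_0^1\mu_p(\mathcal F)\,dp$. Substituting $p\mapsto 1-p$ shows this equals $\frac{n+1}{2}\int_0^1\bigl(\mu_p(\mathcal F)+\mu_{1-p}(\mathcal F)\bigr)\,dp$. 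Since $\int_0^1 2p(1-p)\,dp=\frac13$, it suffices to prove the pointwise bound
$$\mu_p(\mathcal F)+\mu_{1-p}(\mathcal F)\le 2p(1-p)\qquad (0\le p\le 1).$$
The integrand is symmetric under $p\mapsto 1-p$, so we may assume $p\le \frac12$. As a sanity check, the family $\{F: i\in F,\ j\notin F\}$ has $\mu_p=p(1-p)$, so it attains this bound at every $p$.

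To prove the pointwise bound, let $\mathcal U$ be the up-closure of $\mathcal F$ and $\mathcal D$ its down-closure. Then $\mathcal U$ is an intersecting up-set and $\mathcal D$ is a down-set with no two members covering $[n]$, and $\mathcal F\subseteq\mathcal U\cap\mathcal D$. By the Harris--Kleitman inequality, $\mu_q(\mathcal F)\le\mu_q(\mathcal U)\mu_q(\mathcal D)$ for every $q$. Set $a=\mu_p(\mathcal U)$, $b=\mu_{1-p}(\mathcal U)$, $c=\mu_p(\mathcal D)$, $d=\mu_{1-p}(\mathcal D)$. We will use the following facts.
\begin{itemize}
\item Since $\mathcal U$ is intersecting, it is disjoint from $\{[n]\setminus U: U\in\mathcal U\}$, so $a+b\le1$.
\item The standard bound for intersecting families at $p\le\frac12$ gives $a\le p$.
\item The complement family $\{[n]\setminus D: D\in\mathcal D\}$ is intersecting and has $p$-measure $d$, so again $d\le p$ and $c+d\le 1$.
\end{itemize}
Therefore
$$\mu_p(\mathcal F)+\mu_{1-p}(\mathcal F)\le ac+bd\le a(1-d)+(1-a)d=a+d-2ad.$$
The right-hand side is bilinear in $(a,d)\in[0,p]^2$, so its maximum is attained at a vertex of the square. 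The vertex values are $0$, $p$, $p$ and $2p-2p^2$, and $2p(1-p)\ge p$ precisely when $p\le\frac12$. This yields the bound, and integrating over $p$ proves the inequality of the theorem.

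For the equality case, suppose $\lVert\mathcal F\rVert_n=\frac{n+1}6$. The integrands are polynomials, so equality must hold for every $p\in(0,\frac12)$. For $p<\frac12$ the unique maximizer of the bilinear form is $a=d=p$. Thus $\mu_p(\mathcal U)=p$ with $p<\frac12$, and by the known uniqueness in the $p$-biased Erd\H{o}s--Ko--Rado bound, $\mathcal U$ is a star $\{F: i\in F\}$. Likewise $\{[n]\setminus D: D\in\mathcal D\}$ is a star, so $\mathcal D=\{F: j\notin F\}$. Here $i\neq j$, since otherwise $\mathcal F\subseteq\mathcal U\cap\mathcal D=\emptyset$. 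Equality also forces $\mu_p(\mathcal F)=\mu_p(\mathcal U\cap\mathcal D)$, hence $\mathcal F=\mathcal U\cap\mathcal D=\{F: i\in F,\ j\notin F\}$. Conversely, this family is an IU-family with norm exactly $\frac{n+1}6$.

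I expect the main obstacle to be finding the right pointwise inequality and the right decomposition: using the up-closure and down-closure together with Harris--Kleitman, and the complementary constraints $a+b\le1$ and $c+d\le1$. Once those are in place, the remaining steps are routine. The only delicate point is the uniqueness statement for intersecting up-sets with $\mu_p=p$ at $p<\frac12$, which I would cite as the standard stability/uniqueness result for the $p$-biased Erd\H{o}s--Ko--Rado theorem.
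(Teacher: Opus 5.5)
Your proposal is correct and follows essentially the same argument as the paper. It uses the up- and down-closures with Harris--Kleitman, the constraints $a+b\le1$, $c+d\le1$ and $a,d\le p$, and the bound $a+d-2ad\le 2p(1-p)$, then integrates via the Beta identity and settles the equality case through uniqueness in the $p$-biased Erd\H{o}s--Ko--Rado theorem. The differences are only cosmetic: you integrate the symmetrized bound over $[0,1]$ rather than $[0,\frac12]$, and you deduce $\mathcal F=\mathcal U\cap\mathcal D$ from $\mu_p(\mathcal F)=\mu_p(\mathcal U\cap\mathcal D)$ rather than by comparing binomial norms.
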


The proof is short and uses a different probabilistic viewpoint. Instead of cyclic permutations and discrete averaging, we work with  $p$-biased measures on the Boolean lattice. The family $\mathcal{F}$ is embedded into the intersection of its upset and downset. The IU conditions imply that the upset is intersecting and the complement of the downset is also intersecting. Applying the $p$-biased Erd\H{o}s--Ko--Rado theorem to these two families and then using the Harris--Kleitman correlation inequality yields a sharp pointwise estimate for $\mu_p(\mathcal{F})+\mu_{1-p}(\mathcal{F})$. Integrating this estimate over $0\le p\le \frac{1}{2}$ via the Beta-integral identity extracts the exact coefficient $\frac{1}{6}$. This integration step has no analogue in the discrete averaging of the circle method and is exactly where the improvement from $\frac{n}{4}$ to $\frac{n+1}{6}$ comes from.

A natural generalization arises by replacing the two dual restrictions with quantitative versions. Given integers $t,u\ge 1$, a family $\mathcal{F}\subseteq 2^{[n]}$ is called a \textit{$(t,u)$-family} if for all $F,G\in\mathcal{F}$,
$$
|F\cap G|\ge t,\qquad |F\cup G|\le  n-u.
$$
The following extremal problem then arises naturally.

\begin{problem}
Determine
$$
  \max\bigl\{
    \lVert\mathcal F\rVert_n:
    \mathcal F\subseteq 2^{[n]}
    \text{ is a $(t,u)$-family}
  \bigr\}.
$$
\end{problem}

A plausible extremal construction  is
$
\mathcal{E}_{T,U}=\{A\subseteq [n]: T\subseteq A,\ A\cap U=\emptyset\},
$
where $T,U\subseteq[n]$ are disjoint with $|T|=t$ and $|U|=u$. The binomial norm of $\mathcal{E}_{T,U}$ is
$$
\lVert\mathcal E_{T,U}\rVert_n
  =\sum_{F\in\mathcal E_{T,U}}\binom{n}{|F|}^{-1}
  =\sum_{r=t}^{n-u}
    \frac{\binom{n-t-u}{r-t}}{\binom{n}{r}}.
$$
When $t=u=1$,  $\mathcal{E}_{T,U}$ reduces to the extremal family of Theorem \ref{tm1}.

The remainder of this paper is organized as follows. Section \ref{se2} introduces the necessary tools, including the $p$-biased Erd\H{o}s--Ko--Rado theorem, the Harris--Kleitman correlation inequality, and the Beta-integral identity. Section \ref{se3} presents the proof of Theorem \ref{tm1}.
% Section \ref{se4} contains some concluding remarks and open problems.

\section{Tools}\label{se2}

We shall work with the \textit{$p$-biased measure} on the Boolean lattice. For $0\le p\le 1$ and a family $\mathcal{F}\subseteq 2^{[n]}$, define
$$
\mu_p(\mathcal{F})=\sum_{F\in\mathcal{F}}p^{|F|}(1-p)^{n-|F|}.
$$
Equivalently, $\mu_p(\mathcal{F})$ is the probability that a random subset of $[n]$, obtained by including each element independently with probability $p$, belongs to $\mathcal{F}$.
A family $\mathcal{F}\subseteq 2^{[n]}$ is called \textit{increasing} if $F\in\mathcal{F}$ and $F\subseteq G\subseteq [n]$ imply $G\in\mathcal{F}$. It is called \textit{decreasing} if $F\in\mathcal{F}$ and $G\subseteq F$ imply $G\in\mathcal{F}$. For a family $\mathcal{F}$, define its complement by $\overline{\mathcal{F}}=\{[n]\setminus F:F\in\mathcal{F}\}$.

We need two standard facts. The first is the $p$-biased Erd\H{o}s--Ko--Rado theorem, which bounds the measure of an intersecting family under the condition $p\le \frac{1}{2}$.

\begin{theorem}[\cite{fishburn1986,Friedgut2008}]\label{le1}
If $\mathcal{F}\subseteq 2^{[n]}$ is intersecting and $0\le p\le \frac{1}{2}$, then
$$
\mu_p(\mathcal{F})\le p.
$$
Moreover, if $0<p<\frac{1}{2}$, equality holds if and only if $\mathcal{F}=\{F\subseteq [n]:i\in F\}$ for some $i\in[n]$.
\end{theorem}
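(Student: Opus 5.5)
We prove the inequality by reducing to the uniform Erd\H{o}s--Ko--Rado theorem on a large ground set, and then treat equality separately; the equality case is the main obstacle.

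\textbf{Reduction to increasing families.} Replacing $\mathcal F$ by its up-closure $\{G:\exists F\in\mathcal F,\ F\subseteq G\}$ keeps it intersecting and does not decrease $\mu_p$. So we may assume $\mathcal F$ is increasing.

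\textbf{The case $p=\tfrac12$.} An intersecting family contains at most one set from each complementary pair $\{A,[n]\setminus A\}$. Since $\mu_{1/2}(A)=\mu_{1/2}([n]\setminus A)$, this gives $\mu_{1/2}(\mathcal F)\le \tfrac12$.

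\textbf{The case $p<\tfrac12$.} Fix a large $N\ge n$ and lift $\mathcal F$ to
$$\mathcal F_N=\{G\subseteq[N]: G\cap[n]\in\mathcal F\}.$$
This family is intersecting and satisfies $\mu_p(\mathcal F_N)=\mu_p(\mathcal F)$. Each layer $\mathcal F_N^{(k)}$ is a $k$-uniform intersecting family. For $k\le N/2$, the classical Erd\H{o}s--Ko--Rado theorem gives $|\mathcal F_N^{(k)}|\le\binom{N-1}{k-1}$, so the layer density is at most $k/N$. For $k>N/2$ we use the trivial density bound $1$. Writing $K\sim\mathrm{Bin}(N,p)$,
$$\mu_p(\mathcal F)=\sum_k \Pr[K=k]\,\frac{|\mathcal F_N^{(k)}|}{\binom Nk}\le \mathbb E\!\left[\frac KN\right]+\Pr\!\left[K>\tfrac N2\right]=p+\Pr\!\left[K>\tfrac N2\right].$$
By Chernoff, the last term tends to $0$ as $N\to\infty$ because $p<\tfrac12$. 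Since the left side does not depend on $N$, we get $\mu_p(\mathcal F)\le p$.

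\textbf{Equality for $0<p<\tfrac12$.} Because $0<p<1$, every set has positive measure. Hence equality forces $\mathcal F$ to be increasing and maximal intersecting; otherwise the up-closure, or a further added set, would strictly increase $\mu_p$. To pin down the structure, I would first try induction on $n$ via shifting, which preserves intersecting families and $\mu_p$. Split according to the element $n$:
$$\mathcal F_0=\{A\subseteq[n-1]:A\in\mathcal F\},\qquad \mathcal F_1=\{A\subseteq[n-1]:A\cup\{n\}\in\mathcal F\}.$$
Then
$$\mu_p(\mathcal F)=p\,\mu_p(\mathcal F_1)+(1-p)\,\mu_p(\mathcal F_0),$$
with $\mathcal F_0\subseteq\mathcal F_1$ and $\mathcal F_0$ intersecting. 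For an increasing $\mathcal F$, the family $\mathcal F_0$ is also cross-intersecting with $\mathcal F_1$. The cross-intersecting version of the first part gives $\mu_p(\mathcal F_1)\le 1-\mu_{1-p}(\mathcal F_0)$, which I would derive by the same lifting argument. Combining this with the induction hypothesis on $\mathcal F_0$ should force either $\mathcal F_0=\emptyset$ and $\mathcal F_1=2^{[n-1]}$, which is the star at $n$, or $\mathcal F_0$ a star at some $i<n$ with $\mathcal F_1$ the same star, which is the star at $i$.

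\textbf{Main obstacle.} This equality step is where I expect the difficulty. The limiting argument loses all stability information, since Hilton--Milner gaps are exponentially small in $N$, so the induction has to be done exactly. The delicate point is showing that the two terms in the split are simultaneously tight only in these two configurations, using strictness of $p<\tfrac12$ when comparing $\mu_p$ with $\mu_{1-p}$. Shifting then has to be undone at the end: a family whose shift is a star is itself a star, because shifting is measure-preserving and injective on sets.
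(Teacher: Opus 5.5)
The paper does not prove this theorem; it quotes it from the cited references, so there is no internal argument to compare with. Your proof of the inequality is correct and standard. The lift $\mathcal F_N$ is intersecting with the same $\mu_p$, EKR bounds every layer with $k\le N/2$ by density $k/N$, and the tail $\Pr[K>N/2]$ vanishes for $p<\frac12$. The case $p=\frac12$ follows from complementary pairs. You are also right that equality with $0<p<1$ forces $\mathcal F$ to be an increasing, maximal intersecting family. The genuine gap is the \emph{moreover} part: you do not prove it, and the induction you sketch cannot close.

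\textbf{Why the induction fails.} For a maximal intersecting up-set, $\mathcal F_1=2^{[n-1]}\setminus\overline{\mathcal F_0}$ holds automatically, because $\mathcal F$ contains exactly one set from each complementary pair. So your cross-intersecting bound is an identity, and
\[
\mu_p(\mathcal F)=p-\bigl(p\,\mu_{1-p}(\mathcal F_0)-(1-p)\,\mu_p(\mathcal F_0)\bigr).
\]
Equality is therefore equivalent to $(1-p)\mu_p(\mathcal F_0)=p\,\mu_{1-p}(\mathcal F_0)$. The induction hypothesis compares $\mu_p(\mathcal F_0)$ with $p$ and says nothing about this condition. Plugging $\mu_p(\mathcal F_0)\le p$ into your split gives only $\mu_p(\mathcal F)\le p+(1-2p)p$, and nothing forces $\mathcal F_0$ to be extremal on $[n-1]$.

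In fact the needed inequality $(1-p)\mu_p(\mathcal G)\le p\,\mu_{1-p}(\mathcal G)$, for intersecting up-sets $\mathcal G\subseteq 2^{[n-1]}$, is exactly the $n$-point statement for the intersecting family $\mathcal G\cup\{B\cup\{n\}: B\subseteq[n-1],\ B\notin\overline{\mathcal G}\}$. So the reduction is circular.

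Your un-shifting step also needs the intersecting property, not merely injectivity. For example, $S_{12}$ maps the non-star $\{\{2\},\{1,2\},\{1,3\},\{1,2,3\}\}$ onto the star at $1$.

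\textbf{What would work.} You need an ingredient that keeps exact information, for example Friedgut's Fourier-analytic argument. Another option is the $p$-biased edge-isoperimetric inequality $p\,I_p(\mathcal F)\ge\mu_p(\mathcal F)\log_p\mu_p(\mathcal F)$ for increasing $\mathcal F$, where $I_p(\mathcal F)=\frac{d}{dp}\mu_p(\mathcal F)$ by Russo--Margulis. This makes $\log\mu_p(\mathcal F)/\log p$ nonincreasing in $p$. Then equality at one $p_0\in(0,\frac12)$, together with $\mu_{1/2}(\mathcal F)\le\frac12$, forces $\mu_p(\mathcal F)=p$ on $[p_0,\frac12]$, hence identically as polynomials. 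Comparing coefficients in the basis $p^k(1-p)^{n-k}$ gives $|\mathcal F^{(k)}|=\binom{n-1}{k-1}$ for every $k$. In particular there is a unique singleton $\{i\}\in\mathcal F$, so every member contains $i$, and counting shows $\mathcal F$ is the star at $i$.
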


The second is an extension of the Harris--Kleitman correlation inequality, which states that increasing and decreasing families are negatively correlated.

\begin{lemma}[\cite{Siggers2012, FranklTokushige2018}]\label{le2}
If $\mathcal{F}\subseteq 2^{[n]}$ is increasing and $\mathcal{G}\subseteq 2^{[n]}$ is decreasing, then
$$
\mu_p(\mathcal{F}\cap\mathcal{G})\le \mu_p(\mathcal{F})\mu_p(\mathcal{G}).
$$
\end{lemma}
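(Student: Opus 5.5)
We prove Lemma~\ref{le2} by induction on $n$, splitting according to whether the last element $n$ belongs to a random set. Fix $p\in[0,1]$; all measures below are $\mu_p$ on the relevant ground set. For the base case $n=0$, the ground set is empty and $2^{\emptyset}=\{\emptyset\}$. Every family is therefore either $\emptyset$ or $\{\emptyset\}$, so its measure is $0$ or $1$. If $x,y\in\{0,1\}$ denote the measures of $\mathcal F$ and $\mathcal G$, then $\mu_p(\mathcal F\cap\mathcal G)=xy$, and the inequality holds with equality.

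For the inductive step with $n\ge 1$, we decompose along the element $n$. Set
$$
\mathcal F_0=\{A\subseteq[n-1]:A\in\mathcal F\},\qquad \mathcal F_1=\{A\subseteq[n-1]:A\cup\{n\}\in\mathcal F\},
$$
and define $\mathcal G_0,\mathcal G_1$ in the same way. Since $\mathcal F$ is increasing, $\mathcal F_0$ and $\mathcal F_1$ are increasing families in $2^{[n-1]}$ and $\mathcal F_0\subseteq\mathcal F_1$. Since $\mathcal G$ is decreasing, $\mathcal G_0$ and $\mathcal G_1$ are decreasing and $\mathcal G_1\subseteq\mathcal G_0$. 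Moreover $(\mathcal F\cap\mathcal G)_i=\mathcal F_i\cap\mathcal G_i$ for $i=0,1$.

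Write $f_i=\mu_p(\mathcal F_i)$ and $g_i=\mu_p(\mathcal G_i)$, computed on $[n-1]$. Conditioning on whether $n$ lies in the random set gives
$$
\mu_p(\mathcal F)=(1-p)f_0+pf_1,\qquad \mu_p(\mathcal G)=(1-p)g_0+pg_1,
$$
$$
\mu_p(\mathcal F\cap\mathcal G)=(1-p)\mu_p(\mathcal F_0\cap\mathcal G_0)+p\,\mu_p(\mathcal F_1\cap\mathcal G_1).
$$
By the induction hypothesis, the right-hand side of the last identity is at most $(1-p)f_0g_0+pf_1g_1$.

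It then suffices to show that $(1-p)f_0g_0+pf_1g_1\le\bigl((1-p)f_0+pf_1\bigr)\bigl((1-p)g_0+pg_1\bigr)$. Expanding the product and using $(1-p)-(1-p)^2=p(1-p)$ and $p-p^2=p(1-p)$ gives
$$
(1-p)f_0g_0+pf_1g_1-\bigl((1-p)f_0+pf_1\bigr)\bigl((1-p)g_0+pg_1\bigr)=p(1-p)(f_1-f_0)(g_1-g_0).
$$
The containments $\mathcal F_0\subseteq\mathcal F_1$ and $\mathcal G_1\subseteq\mathcal G_0$ give $f_1\ge f_0$ and $g_1\le g_0$, so this difference is nonpositive and the induction closes.

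The main point needing care is checking that the monotonicity of $\mathcal F$ and $\mathcal G$ yields both of these containments, since that is what fixes the sign of the cross term. The remaining computations are routine.
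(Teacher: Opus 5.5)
Your proof is correct. The decomposition along the element $n$, the containments $\mathcal F_0\subseteq\mathcal F_1$ and $\mathcal G_1\subseteq\mathcal G_0$, and the identity writing the defect as $p(1-p)(f_1-f_0)(g_1-g_0)\le 0$ all check out. The base case $n=0$ is also fine under the usual convention $0^0=1$, the same convention the paper relies on when it uses $\mu_p$ at $p=0$.

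The paper does not prove this lemma at all. It quotes it from the cited literature, so there is no competing argument to compare against. What you have written is the standard one-coordinate induction, essentially Kleitman's original argument transcribed to the $p$-biased setting. It makes the paper's use of the lemma self-contained at the cost of a few lines.

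Your argument also works for every $p\in[0,1]$ with no restriction. The paper needs exactly this, since it applies the lemma both at $p\le\tfrac12$ and at $q=1-p\ge\tfrac12$. The same induction in fact goes through verbatim for product measures with a different bias on each coordinate.
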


We  also need the following elementary identity.

\begin{lemma}[Beta-integral identity]\label{le3}
For $0\le k\le n$,
$$
\int_0^1 p^k(1-p)^{n-k}\,dp=\frac{k!(n-k)!}{(n+1)!}=\frac{1}{(n+1)\binom{n}{k}}.
$$
\end{lemma}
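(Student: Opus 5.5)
We prove Lemma~\ref{le3} by induction on $k$, for fixed $n$, using integration by parts. Set
$$
I_k:=\int_0^1 p^k(1-p)^{n-k}\,dp,\qquad 0\le k\le n.
$$
For the base case $k=0$ we compute directly: $I_0=\int_0^1(1-p)^n\,dp=\frac{1}{n+1}$. This agrees with $\frac{0!\,n!}{(n+1)!}$.

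For the inductive step, take $1\le k\le n$. Integrate by parts, differentiating $p^k$ and integrating $(1-p)^{n-k}$, with antiderivative $-\frac{(1-p)^{n-k+1}}{n-k+1}$. This gives
$$
I_k=\Bigl[-\frac{p^k(1-p)^{n-k+1}}{n-k+1}\Bigr]_0^1+\frac{k}{n-k+1}\int_0^1 p^{k-1}(1-p)^{n-k+1}\,dp .
$$
The boundary term vanishes at both endpoints. At $p=0$ this is because $k\ge 1$, and at $p=1$ because $n-k+1\ge 1$. The remaining integral is exactly $I_{k-1}$ for the same $n$, since $n-(k-1)=n-k+1$. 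Hence
$$
I_k=\frac{k}{n-k+1}\,I_{k-1}.
$$
Assume $I_{k-1}=\frac{(k-1)!\,(n-k+1)!}{(n+1)!}$. Then
$$
I_k=\frac{k}{n-k+1}\cdot\frac{(k-1)!\,(n-k+1)!}{(n+1)!}=\frac{k!\,(n-k)!}{(n+1)!},
$$
which completes the induction.

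Finally, $\frac{k!(n-k)!}{(n+1)!}=\frac{1}{n+1}\cdot\frac{k!(n-k)!}{n!}=\frac{1}{(n+1)\binom{n}{k}}$.

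The computation is routine. The only point needing care is that the boundary terms vanish, which uses $1\le k\le n$. One could alternatively argue probabilistically: $I_k$ is the probability that, among $n+1$ i.i.d.\ uniform points, a designated one exceeds a specified $k$-set of the others and lies below the remaining $n-k$. By symmetry this probability is $\frac{k!(n-k)!}{(n+1)!}$. The integration-by-parts route above is the one I would write out.
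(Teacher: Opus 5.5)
Your proof is correct and follows essentially the paper's route: both use integration by parts to get a one-step recurrence and telescope it down to an elementary endpoint integral. The only difference is direction, which is immaterial. You lower the power of $p$ to reach $\int_0^1(1-p)^n\,dp$, whereas the paper lowers the power of $1-p$ via $I(a,b)=\frac{b}{a+1}I(a+1,b-1)$ to reach $\int_0^1 p^n\,dp$.
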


\begin{proof}
Let $I(a,b)=\int_0^1 p^a(1-p)^b\,dp$, where $a,b\ge 0$. We first derive a recurrence relation using integration by parts. In the form
$
\int_0^1 u\,dv=\left.uv\right|_0^1-\int_0^1 v\,du,
$
set
$$
u=(1-p)^b,\qquad dv=p^a\,dp.
$$
Then
$$
du=-b(1-p)^{b-1}\,dp,\qquad v=\frac{p^{a+1}}{a+1},
$$
where we omit the constant of integration since it cancels in the definite integral. Hence,
$$
I(a,b)=\left.\frac{p^{a+1}}{a+1}(1-p)^b\right|_0^1+\frac{b}{a+1}\int_0^1 p^{a+1}(1-p)^{b-1}\,dp.
$$
At $p=1$, we have $(1-p)^b=0$ (for $b\ge 1$), and at $p=0$, we have $p^{a+1}=0$. Thus,
$$
I(a,b)=\frac{b}{a+1}I(a+1,b-1)\qquad(b\ge 1). 
$$

Now apply this recurrence repeatedly with $a=k$ and $b=n-k$. After $n-k$ steps, we obtain
$$
I(k,n-k)=\frac{(n-k)(n-k-1)\cdots 1}{(k+1)(k+2)\cdots n}\,I(n,0).
$$
Since $I(n,0)=\int_0^1 p^n\,dp=\frac{1}{n+1}$, we obtain
$$
I(k,n-k)=\frac{(n-k)!}{(k+1)(k+2)\cdots n}\cdot \frac{1}{n+1}=\frac{k!(n-k)!}{n!(n+1)}=\frac{k!(n-k)!}{(n+1)!}=\frac{1}{(n+1)\binom{n}{k}}.
$$
This proves the lemma.
\end{proof}

\section{Proof of Theorem \ref{tm1}}\label{se3}

Let $\mathcal{F}\subseteq 2^{[n]}$ be an IU-family. Define its \textit{upset} and \textit{downset} by
$$
\mathcal{F}^{\uparrow}=\{U\subseteq [n]: F\subseteq U\text{ for some }F\in\mathcal{F}\},\qquad
\mathcal{F}^{\downarrow}=\{D\subseteq [n]: D\subseteq F\text{ for some }F\in\mathcal{F}\}.
$$
Then $\mathcal{F}^{\uparrow}$ is increasing, $\mathcal{F}^{\downarrow}$ is decreasing, and $\mathcal{F}\subseteq \mathcal{F}^{\uparrow}\cap\mathcal{F}^{\downarrow}$.

The IU conditions imply two structural facts. First, $\mathcal{F}^{\uparrow}$ is intersecting. Indeed, if $U,V\in\mathcal{F}^{\uparrow}$, then $U\supseteq F$ and $V\supseteq G$ for some $F,G\in\mathcal{F}$, and therefore $U\cap V\supseteq F\cap G\neq\emptyset$. Second, no two members of $\mathcal{F}^{\downarrow}$ cover $[n]$. Indeed, if $U,V\in\mathcal{F}^{\downarrow}$, then $U\subseteq F$ and $V\subseteq G$ for some $F,G\in\mathcal{F}$, which implies $U\cup V\subseteq F\cup G\neq[n]$. Equivalently, $\overline{\mathcal{F}^{\downarrow}}$ is intersecting.

Fix $0\le p\le \frac{1}{2}$ and put $q=1-p$. Set
$$
a=\mu_p(\mathcal{F}^{\uparrow}),\quad b=\mu_q(\mathcal{F}^{\uparrow}),\quad c=\mu_p(\mathcal{F}^{\downarrow}),\quad d=\mu_q(\mathcal{F}^{\downarrow}).
$$
Since $\mathcal{F}^{\uparrow}$ is intersecting, it cannot contain both a set and its complement. As $\mu_q(\mathcal{F}^{\uparrow})=\mu_p(\overline{\mathcal{F}^{\uparrow}})$, we have
\begin{equation}\label{f1}
a+b\le 1. 
\end{equation}
Similarly, the union property of $\mathcal{F}^{\downarrow}$ implies that it cannot contain both a set and its complement. Hence,
\begin{equation}\label{f2}
c+d\le 1. 
\end{equation}

Applying Theorem \ref{le1} to the intersecting family $\mathcal{F}^{\uparrow}$ yields
\begin{equation}\label{f3}
a\le p. 
\end{equation}
Moreover, since $\overline{\mathcal{F}^{\downarrow}}$ is intersecting, the same theorem gives
\begin{equation}\label{f4}
d=\mu_q(\mathcal{F}^{\downarrow})=\mu_p(\overline{\mathcal{F}^{\downarrow}})\le p. 
\end{equation}
Since $\mathcal{F}\subseteq \mathcal{F}^{\uparrow}\cap\mathcal{F}^{\downarrow}$, and $\mathcal{F}^{\uparrow}$ is increasing and $\mathcal{F}^{\downarrow}$ is decreasing, Lemma \ref{le2} gives
$$
\mu_p(\mathcal{F})\le \mu_p(\mathcal{F}^{\uparrow}\cap\mathcal{F}^{\downarrow})\le ac.
$$
Similarly, we have
$$
\mu_q(\mathcal{F})\le \mu_q(\mathcal{F}^{\uparrow}\cap\mathcal{F}^{\downarrow})\le bd.
$$
Combining these with \eqref{f1} and \eqref{f2}, we get
$$
\mu_p(\mathcal{F})+\mu_q(\mathcal{F})
\le ac+bd
\le a(1-d)+(1-a)d
= a+d-2ad.
$$
By \eqref{f3} and \eqref{f4}, we  have $0\le a,d\le p\le \frac{1}{2}$. Then the function $a+d-2ad$ is increasing in both variables. Thus,
\begin{equation}\label{f5}
\mu_p(\mathcal{F})+\mu_{1-p}(\mathcal{F})\le 2p(1-p),\qquad 0\le p\le \frac12. 
\end{equation}

It remains to recover the binomial norm from this measure estimate. Integrating \eqref{f5} over $0\le p\le \frac{1}{2}$, we obtain
$$
\int_0^1 \mu_p(\mathcal{F})\,dp
= \int_0^{\frac{1}{2}}\bigl(\mu_p(\mathcal{F})+\mu_{1-p}(\mathcal{F})\bigr)\,dp
\le \int_0^{\frac{1}{2}}2p(1-p)\,dp
= \frac16.
$$
By Lemma \ref{le3},
$$
\begin{aligned}
  \int_0^1\mu_p(\mathcal F)\,dp
  &=\sum_{F\in\mathcal F}
    \int_0^1p^{|F|}(1-p)^{n-|F|}\,dp=\frac{1}{n+1}
    \sum_{F\in\mathcal F}\binom{n}{|F|}^{-1}\\
  &=\frac{1}{n+1}\lVert\mathcal F\rVert_n.
\end{aligned}
$$
Consequently,
$$
  \lVert\mathcal F\rVert_n\leq\frac{n+1}{6}.
$$

We now characterize the equality cases. If equality holds in Theorem \ref{tm1}, then equality must hold throughout the chain of inequalities in the proof. Define 
$$
\Delta(p)=2p(1-p)-\bigl(\mu_p(\mathcal{F})+\mu_{1-p}(\mathcal{F})\bigr).
$$
From  \eqref{f5}, $\Delta(p)\ge 0$ on $0\le p\le \frac{1}{2}$, and the integral computation shows $\int_0^{\frac{1}{2}}\Delta(p)\,dp=0$. Hence, $\Delta(p)\equiv 0$ on $[0,\frac{1}{2}]$. Choose $p_0\in(0,\frac{1}{2})$. Equality throughout the chain implies
$$
a=\mu_{p_0}(\mathcal{F}^{\uparrow})=p_0,\qquad d=\mu_{1-p_0}(\mathcal{F}^{\downarrow})=\mu_{p_0}(\overline{\mathcal{F}^{\downarrow}})=p_0.
$$
By  Theorem \ref{le1}, there exist $i,j\in[n]$ such that
$$
\mathcal{F}^{\uparrow}=\{X\subseteq[n]:i\in X\},\qquad \overline{\mathcal{F}^{\downarrow}}=\{X\subseteq[n]:j\in X\}.
$$
 Since $\mathcal{F}\subseteq\mathcal{F}^{\uparrow}\cap\mathcal{F}^{\downarrow}$, we have
$$
\mathcal{F}\subseteq \{X\subseteq[n]:i\in X,\ j\notin X\}.
$$
If $i=j$, then the right-hand side is empty, contradicting equality. Hence, $i\ne j$. Let
$
\mathcal{E}_{i,j}=\{X\subseteq[n]:i\in X,\ j\notin X\}.
$
Its binomial norm is
$$
  \lVert\mathcal E_{i,j}\rVert_n=\sum_{X\in\mathcal{E}_{i,j}}\binom{n}{|X|}^{-1}
= \sum_{k=1}^{n-1}\frac{\binom{n-2}{k-1}}{\binom{n}{k}}
= \sum_{k=1}^{n-1}\frac{k(n-k)}{n(n-1)}
= \frac{n+1}{6}.
$$
Since every summand in the definition of the binomial norm is positive,
the inclusion $\mathcal F\subseteq\mathcal E_{i,j}$ together with
$
  \lVert\mathcal F\rVert_n
  =\lVert\mathcal E_{i,j}\rVert_n
$
forces $\mathcal F=\mathcal E_{i,j}$. This completes the proof.

\section*{Declaration of competing interest}
We declare that we have no conflict of interest to this work.

\section*{Data availability}
No data was used for the research described in the article.

\section*{Acknowledgments}
The authors thank Peter Frankl for his helpful comments on the
terminology and historical background of the binomial norm, and for
drawing their attention to his related work.
%The authors would like to express their sincere thanks to the referee for the valuable suggestions which greatly improved the presentation of the %manuscript.

\end{document}